\numberwithin{equation}{section}
\newtheorem{theorem}{Theorem}[section]
\newtheorem{proposition}[theorem]{Proposition}
\newtheorem{lemma}[theorem]{Lemma}
\theoremstyle{remark}
\newtheorem{remark}{Remark}[section]
\theoremstyle{definition}
\newtheorem{definition}{Definition}[section]
\newcommand{\R}{\mathbb{R}}
\newcommand{\Z}{\mathbb{Z}}
\begin{document}


\title[Magnetic Schr\"odinger equation]
{Endpoint Strichartz estimates for the magnetic Schr\"odinger
equation}

\author{Piero D'Ancona}
\address{Piero D'Ancona:
SAPIENZA - Universit\`a di Roma, Dipartimento di Matematica,
Piazzale A.~Moro 2, I-00185 Roma, Italy}
\email{dancona@mat.uniroma1.it}

\author{Luca Fanelli}
\address{Luca Fanelli:
Universidad del Pais Vasco, Departamento de
Matem$\acute{\text{a}}$ticas, Apartado 644, 48080, Bilbao, Spain}
\email{luca.fanelli@ehu.es}

\author{Luis Vega}
\address{Luis Vega: Universidad del Pais Vasco, Departamento de
Matem$\acute{\text{a}}$ticas, Apartado 644, 48080, Bilbao, Spain}
\email{luis.vega@ehu.es}

\author{Nicola Visciglia}
\address{Nicola Visciglia: Universit\'a di Pisa, Dipartimento di Matematica,
Largo B. Pontecorvo 5, 56100 Pisa, Italy}
\email{viscigli@dm.unipi.it}

\begin{abstract}
  We prove Strichartz estimates for the Schr\"odinger equation with
  an electromagnetic potential, in dimension $n\geq3$. The decay
  and regularity assumptions on the potentials are almost critical,
  i.e., close to the Coulomb case. In addition, we require
  repulsivity and a non trapping condition, which are expressed as
  smallness of suitable components of the potentials. However,
  the potentials themselves can be large, and we avoid completely any
  a priori spectral assumption on the operator.
  The proof is based on smoothing estimates and new Sobolev
  embeddings for spaces associated to magnetic potentials.
\end{abstract}

\date{\today}

\subjclass[2000]{35L05, 58J45.}
\keywords{%
Strichartz estimates, dispersive equations, Schr\"odinger equation,
magnetic potential}

\maketitle

\section{Introduction}\label{sec:introd}

Recent research on linear and nonlinear dispersive equations
is largely focused on measuring precisely the rate of decay of
solutions. Indeed, decay and Strichartz estimates are one of the
central tools of the theory, with immediate applications to
local and global well posedness, existence of low regularity
solutions, and scattering. This point of view includes most
fundamental equations of physics like
the Schr\"odinger, Klein-Gordon, wave and Dirac equations.
Strichartz estimates appeared in \cite{str}; the basic framework for this study was laid out in the two
papers \cite{GinibreVelo95-generstric}, \cite{KeelTao98-endpoinstric},
which examined in an exhaustive way
the case of constant coefficient, unperturbed equations.
This leads naturally to the possible extentions to equations
perturbed with electromagnetic potentials or with variable
coefficients; a general theory of dispersive properties for
such equations is still under construction and very actively
researched.

In the present paper we shall focus on the time dependent
Schr\"odinger equation
\begin{equation}\label{eq:schro}
    i\partial_tu(t,x)=Hu(t,x),\qquad
    u(0,x)=\varphi(x),\qquad
    x\in \mathbb{R}^{n},\quad n\ge3
\end{equation}
associated with the electromagnetic Schr\"odinger operator
\begin{equation}\label{eq:hamiltoniana}
  H:= -\nabla_A^2+V(x),
  \qquad
  \nabla_A:=\nabla-iA(x)
\end{equation}
where $A=(A^1,\dots,A^n):\R^n\to\R^n$, $V:\R^n\to\R$. We recall that
in the unperturbed case $A\equiv0$, $V\equiv0$, dispersive properties
are best expressed in terms of the mixed norms on $\mathbb{R}^{1+n}$
\begin{equation*}
  L^pL^q:= L^p(\R_t;L^{q}(\R^n_x))
\end{equation*}
as follows: for every $n\geq 3$,
\begin{equation*}
  \|e^{it\Delta}\varphi\|_{L^pL^q}\leq c_{n}\|\varphi\|_{L^2},
\end{equation*}
provided the couple $(p,q)$ satisfies the admissibility condition
\begin{equation}\label{eq:admis}
  \frac2p=\frac n2-\frac nq,
  \qquad
  2\leq p\leq\infty.
\end{equation}
These estimates are usually referred to as \emph{Strichartz estimates}.
Our main goal is to find sufficient conditions on the
potentials $A,V$ such that Strichartz estimates
are true for the perturbed equation \eqref{eq:schro}.

In the purely electric case $A\equiv0$ the literature
is extensive and almost complete; we may cite among many others
the papers \cite{BurqPlanchonStalker04-a}, \cite{GoldbergSchlag04-a},
\cite{RodnianskiSchlag04-a}. It is now clear that in
this case the decay
$V(x)\sim1/|x|^2$ is critical for the validity of Strichartz
estimates; suitable counterexamples were constructed in \cite{GVV}.
In the
magnetic case $A\neq0$, the Coulomb decay $|A|\sim1/|x|$ is likely to be
critical, however no explicit counterexamples are
available at the time. An intense research is ongoing concerning
Strichartz estimates for the magnetic
Schr\"odinger equation, see e.g.~\cite{pda-lf2},
\cite{ErdoganGoldbergSchlag-a},
\cite{ErdoganGoldbergSchlag-b}, \cite{GST}; see also
\cite{RobbianoZuily-a} for a more general class of first order
perturbations.

Due to the perturbative techniques used in the above
mentioned papers, an assumption concerning absence of zero-energy
resonances for the perturbed operator $H$ is typically required in order to
preserve the dispersion. In the case $A\equiv0$ it was shown in
\cite{BurqPlanchonStalker04-a} how this abstract condition can be
dispensed with, by directly proving some weak dispersive estimates
(also called Morawetz or smoothing estimates)
via multipliers methods. Here we shall
give a very short proof of Strichartz estimates
for the magnetic Schr\"odinger equation with potentials of almost
Coulomb decay, based uniquely on the weak dispersive estimates proved
in \cite{FV}. The leading theme is that direct multiplier techniques
allow to avoid, under suitable repulsivity
conditions on $V$ and non-trapping conditions on $A$ (see also
\cite{F}), the presence of non-dispersive components, and to preserve
Strichartz estimates.

We begin by introducing some notations.
Regarding as usual the potential $A$ as a 1-form, we define
the corresponding {\it magnetic field} as the 2-form $B=dA$,
which can be identified with the anti-symmetric gradient of $A$:
\begin{equation}\label{eq:B}
  B\in\mathcal M_{n\times n},
  \qquad
  B=DA-(DA)^t,
\end{equation}
where $(DA)_{ij}=\partial_iA^j$, $(DA)^t_{ij}=(DA)_{ji}$. In
dimension 3, $B$ is uniquely determined by the vector
field $\text{curl}A$ via the vector product
\begin{equation}\label{eq:B3}
  Bv=\text{curl}A\times v,
  \qquad
  \forall v\in\R^3.
\end{equation}
We define the {\it trapping component} of $B$ as
\begin{equation}\label{eq:Btau}
  B_\tau(x)=\frac{x}{|x|}B(x);
\end{equation}
when $n=3$ this reduces to
\begin{equation}\label{eq:Btau3}
  B_\tau(x)=\frac{x}{|x|}\times\text{curl}A(x),
  \qquad
  n=3,
\end{equation}
thus we see that $B_\tau$ is a tangential vector.
The trapping component may be intepreted as
an obstruction to the dispersion of solutions; some explicit
examples of potentials $A$ with $B_\tau=0$ in dimension 3
are given in \cite{F}, \cite{FV}.

Moreover, by
\begin{equation*}
  \partial_rV=\nabla V\cdot \frac x{|x|},\qquad
\end{equation*}
we denote the radial
derivative of $V$, and we decompose it into its positive and
negative part
\begin{equation*}
  \partial_rV=(\partial_rV)_+-(\partial_rV)_-.
\end{equation*}
The positive part $(\partial_rV)_+$ also represents an obstruction
to dispersion, and indeed we shall require it to be small in a
suitable sense. To ensure good spectral properties of the operator
we shall also assume that the negative part $V_{-}$ is not too large
in the sense of the Kato norm:

\begin{definition}\label{def:kato}
  Let $n\ge3$. A measurable function $V(x)$ is said to be in the
  \emph{Kato class} $K_{n}$ provided
  \begin{equation*}
    \lim_{r \downarrow0}\sup_{x\in \mathbb{R}^{n}}
    \int_{|x-y|\le r} \frac{|V(y)|}{|x-y|^{n-2}}dy=0.
  \end{equation*}
  We shall usually omit the reference to the space dimension and write
  simply $K$ instead of $K_{n}$.
  The \emph{Kato norm} is defined as
  \begin{equation*}
    \|V\|_{K}=\sup_{x\in \mathbb{R}^{n}}
    \int\frac{|V(y)|}{|x-y|^{n-2}}dy.
  \end{equation*}
\end{definition}
A last notation we shall need is the radial-tangential norm
\begin{equation*}
\|f\|_{L^p_rL^\infty(S_r)}^p:=\int_0^\infty \sup_{|x|=r}|f|^pdr.
\end{equation*}

In our results we always assume that the operators $H$ and
$-\Delta_A:=-(\nabla-iA)^2$ are self-adjoint and
positive on $L^2$, in order to ensure the existence of the
propagator $e^{itH}$ and of the powers $H^s$ via the
spectral theorem. There are several sufficient conditions
for selfadjointness and positivity, which can be expressed in terms
of the local integrability properties of the coefficients
(see the standard references \cite{CFKS}, \cite{LS});
here we prefer to leave this as an abstract assumption.
Our main result is the following:

\begin{theorem}\label{thm:main}
  Let $n\geq3$. Given $A,V\in C^1_{loc}(\R^n\setminus\{0\})$,
  assume the operators $\Delta_{A}=-(\nabla-iA)^{2}$ and
  $H=-\Delta_{A}+V$ are selfadjoint and positive on $L^{2}$.
  Moreover assume that
  \begin{equation}\label{eq:asskato}
    \|V_{-}\|_{K}<\frac{\pi^{\frac n2}}{\Gamma\left(\frac n2-1\right)}
  \end{equation}
  for a sufficiently small $\epsilon>0$ depending on $A$ and
  \begin{equation}\label{ass:decay}
    \sum_{j\in\Z}2^{j}\sup_{|x|\sim 2^{j}}|A|
    +
    \sum_{j\in\Z}2^{2j}\sup_{|x|\sim 2^{j}}|V|<\infty,
  \end{equation}
  and the Coulomb gauge condition
  \begin{equation}\label{eq:gauge}
   \mathrm{div}\ A=0.
  \end{equation}
  Finally, when $n=3$, we assume that for some $M>0$
  \begin{equation}\label{ass:smoo3D}
    \frac{\left(M+\frac12\right)^2}{M}\||x|^{\frac32}B_\tau\|_{L^2_rL^\infty(S_r)}^2
    +(2M+1)\||x|^2(\partial_rV)_+\|_{L^1_rL^\infty(S_r)}<\frac12,
  \end{equation}
  while for $n\geq4$ we assume that
  \begin{equation}\label{ass:smoo4D}
    \||x|^{2}B_\tau(x)\|_{L^{\infty}}^{2}
    +
    2\||x|^{3}(\partial_rV)_+(x)\|_{L^{\infty}}
    <\frac23(n-1)(n-3).
  \end{equation}
  Then, for any Schr\"odinger admissible couple $(p,q)$,
  the following Strichartz estimates hold:
  \begin{equation}\label{eq:strichartz}
    \|e^{itH}\varphi\|_{L^pL^q}\leq C \|\varphi \|_{L^2},
    \qquad
    \frac2p=\frac n2-\frac nq,
    \quad
    p\geq2,
    \quad
    p\neq2\ \text{if }n=3.
  \end{equation}
  In dimension $n=3$, we have the endpoint estimate
  \begin{equation}\label{eq:strichartz3D}
    \||D|^{\frac12}e^{itH}\varphi\|_{L^2L^{6}}\lesssim\|H^{\frac14}\varphi \|_{L^2},
  \end{equation}
\end{theorem}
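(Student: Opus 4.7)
The plan is to treat the magnetic evolution as a perturbation of the free Schr\"odinger one and close the estimates via the smoothing bounds of [FV]. First I would use the Coulomb gauge (\ref{eq:gauge}) to expand
\[
H=-\Delta+2iA\cdot\nabla+(|A|^2+V),
\]
so that $u=e^{itH}\varphi$ solves $(i\partial_t+\Delta)u=F(u)$ with $F(u):=2iA\cdot\nabla u+(|A|^2+V)u$. Duhamel then represents $u$ as $e^{it\Delta}\varphi$ plus an inhomogeneous term driven by $F(u)$.

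For the non-endpoint range in (\ref{eq:strichartz}), I would apply to the free part the Keel--Tao estimates, and to the inhomogeneous part the dual Strichartz--smoothing bound
\[
\Bigl\|\int_0^t e^{i(t-s)\Delta}G(s)\,ds\Bigr\|_{L^p L^q}\lesssim \bigl\|\,|x|\,G\bigr\|_{L^2_{t,x}},
\]
which is available for admissible $(p,q)$ with $p\neq2$ via Christ--Kiselev. Matters then reduce to controlling $\||x|F(u)\|_{L^2_{t,x}}$. I would decompose dyadically in $|x|$ and invoke the magnetic Kato smoothing from [FV],
\[
\bigl\||x|^{-1/2}\nabla_A e^{itH}\varphi\bigr\|_{L^2_{t,x}}+\bigl\||x|^{-1}e^{itH}\varphi\bigr\|_{L^2_{t,x}}\lesssim\|\varphi\|_{L^2},
\]
which holds precisely under (\ref{ass:smoo3D})/(\ref{ass:smoo4D}) and (\ref{eq:asskato}). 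The dyadic sums then close against (\ref{ass:decay}): the weight $|x||A|$ pairs with $|x|^{-1/2}\nabla_A u$ and is summable because $\sum_j 2^j\sup_{|x|\sim2^j}|A|<\infty$, while $|x|(|A|^2+V)$ pairs with $|x|^{-1}u$ and is summable because $\sum_j 2^{2j}\sup_{|x|\sim2^j}(|A|^2+|V|)<\infty$.

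The hard part will be the 3D endpoint (\ref{eq:strichartz3D}), where Christ--Kiselev fails. My approach would be a magnetic analogue of the Keel--Tao bilinear $TT^*$ endpoint argument, performed intrinsically in the Sobolev scale of $H$ via the spectral theorem: take $H^{1/4}\varphi$ as the datum, use the magnetic smoothing estimate as a substitute for the missing free endpoint dispersive bound, and finally convert $\|H^{1/4}f\|$ into $\||D|^{1/2}f\|$ on the appropriate Lebesgue space by a magnetic Sobolev embedding. The main obstacle is establishing this magnetic Sobolev embedding at fractional order under the almost-critical decay (\ref{ass:decay}); this is the new ingredient promised in the abstract, and it is here that the Kato smallness (\ref{eq:asskato}) is expected to enter, presumably through a heat-kernel or resolvent comparison between $H$ and $-\Delta$.
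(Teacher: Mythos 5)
Your high-level picture matches the paper's: Coulomb gauge plus Duhamel, close against the Fanelli--Vega smoothing estimates, and convert between $H^{1/4}$ and $|D|^{1/2}$ via a magnetic Sobolev embedding (you even correctly guessed the heat-kernel/Kato comparison behind Theorem~\ref{thm:equiv}). But there are two concrete gaps in the way you propose to run the Duhamel term.

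First, the weighted dual smoothing bound you invoke, $\|\int_0^t e^{i(t-s)\Delta}G\,ds\|_{L^pL^q}\lesssim\||x|G\|_{L^2_{t,x}}$, does not pair correctly with the decay hypothesis \eqref{ass:decay}. The dyadic annuli are disjoint, so $\||x|G\|_{L^2_{t,x}}$ is an $\ell^2$ sum over annuli, whereas \eqref{ass:decay} is an $\ell^1$ condition; combining the Morrey--Campanato bound \eqref{eq:fanellivega} with the H\"older step on each annulus naturally produces $\sum_j 2^{j/2}\|F_j\|_{L^2L^2}$, a Besov-type $\ell^1$ norm. The estimate you actually need is the paper's \eqref{eq:crucial},
\begin{equation*}
  \Bigl\||D|^{1/2}\int_0^t e^{i(t-s)\Delta}F(s)\,ds\Bigr\|_{L^pL^q}\lesssim\sum_{j\in\Z}2^{j/2}\|F_j\|_{L^2L^2},
\end{equation*}
with $F_j$ supported on $\{|x|\sim2^j\}$; this has the right sum structure. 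Relatedly, you wrote the perturbation as $2iA\cdot\nabla u+(|A|^2+V)u$ while quoting the smoothing bound for $\nabla_A u$, so there is a mismatch: you should expand $H=-\Delta+2iA\cdot\nabla_A-|A|^2+V$ (note the sign and the covariant gradient) so that the first-order term is directly controlled by \eqref{eq:fanellivega}.

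Second, and more importantly, the 3D endpoint does not require any ``magnetic Keel--Tao $TT^*$'' machinery, and it is far from clear such an argument could be carried out under the present hypotheses. The key point of the paper, which your proposal misses, is that the \emph{free} estimate \eqref{eq:crucial} is already known at the endpoint $p=2$, $n\geq3$: the non-endpoint range follows from Ruiz--Vega (or Christ--Kiselev plus the dual of local smoothing), while the endpoint $p=2$ is Lemma~3 of Ionescu--Kenig \cite{IK}. Once \eqref{eq:crucial} holds for all admissible $(p,q)$ including $p=2$, the Duhamel bookkeeping is identical across the whole admissible range, and the endpoint \eqref{eq:strichartz3D} drops out for free; the only remaining obstruction is that the magnetic Sobolev embedding \eqref{eq:magsob2A} is not available at $q=6$ in $n=3$, which is exactly why \eqref{eq:strichartz3D} is stated with $|D|^{1/2}$ on the left and $H^{1/4}$ on the right rather than as a plain $L^2_t L^6_x$ bound. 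Attempting instead to prove an intrinsic endpoint for $e^{itH}$ would amount to re-proving dispersive estimates for the perturbed flow, which is precisely what the perturbative scheme is designed to avoid.
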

\begin{remark}\label{rem:regularity}
  Let us remark that the regularity assumption 
  $A,V\in C^1_{\text{loc}}(\R^n\setminus\{0\})$ is actually 
  stronger than what we really require. 
  For the validity of the Theorem, we just need to give meaning 
  to inequalities \eqref{ass:smoo3D}, \eqref{ass:smoo4D}.
\end{remark}
\begin{remark}\label{rem:smoo}
  Assumptions \eqref{eq:gauge}, \eqref{ass:smoo3D}, and \eqref{ass:smoo4D}
  imply the weak dispersion of
  the propagator $e^{itH}$ (see Theorems 1.9, 1.10, assumptions (1.24), and (1.27) in
  \cite{FV}). Actually assumption (1.24) in \cite{FV} seems to be stronger than
  \eqref{ass:smoo3D}, but reading carefully the proof of Theorem 1.9 in \cite{FV}
  it is clear that the real assumption is our \eqref{ass:smoo3D} (see inequality (3.14) in \cite{FV}).
  The strict inequality in \eqref{ass:smoo3D},
  \eqref{ass:smoo4D} is essential, in order to dispose of the
  weighted $L^2$-estimate in the above mentioned Theorems by
  \cite{FV} (see also inequality \eqref{eq:fanellivega} below).
\end{remark}

\begin{remark}\label{rem:resonances}
  We emphasize that in Theorem \ref{thm:main} we do not require
  absence of resonances at energy zero, in contrast with
  \cite{ErdoganGoldbergSchlag-a}, \cite{ErdoganGoldbergSchlag-b}.
  Indeed, this is possible thanks to
  the non-trapping and repulsivity conditions
  \eqref{ass:smoo3D}, \eqref{ass:smoo4D};
  notice however that these conditions can be checked easily in
  concrete examples, which is not the case for the abstract assumption
  on resonances.
\end{remark}

The derivation of Strichartz estimates
from the weak dispersive ones turns out to be
remarkably simple if working on the half derivative
$|D|^{1/2}u$, see Section \ref{sec:main} for details.
As a drawback, the final estimates are expressed
in terms of fractional Sobolev spaces generated by the perturbed
magnetic operator $-\Delta_{A}$. Thus, in order to revert to standard
Strichartz norms as in \eqref{eq:strichartz}, we need suitable
bounds for the perturbed Sobolev norms in terms of the standard ones.
This is provided by the following theorem, which we think is
of independent interest.

\begin{theorem}\label{thm:equiv}
  Let $n\ge3$.
  Given $A\in L^{2}_{loc}(\R^n;\R^n)$, $V:\R^n\to\R$,
  assume the operators $\Delta_{A}=-(\nabla-iA)^{2}$ and
  $H=-\Delta_{A}+V$ are selfadjoint and positive on $L^{2}$.
  Moreover, assume that $V_{+}$ is of Kato class, $V_{-}$ satisfies
  \begin{equation}\label{eq:smallkato2}
    \|V_{-}\|_{K}<\frac{\pi^{\frac n2}}{\Gamma\left(\frac n2-1\right)},
  \end{equation}
  and
  \begin{equation}\label{eq:ass2A}
    |A|^{2}+V\in L^{n/2,\infty},\qquad
    A\in L^{n,\infty}.
  \end{equation}
  Then the following estimate holds:
  \begin{equation}\label{eq:magsob2A}
    \|H^{1/4}f\|_{L^{q}}\leq C_{q}\||D|^{\frac12}f\|_{L^q},\qquad
    1<q<2n,\qquad n\ge3.
  \end{equation}
  In addition we have the reverse estimate
  \begin{equation}\label{eq:magsob2B}
      \|H^{1/4}f\|_{L^{q}}\geq c_{q}\||D|^{\frac12}f\|_{L^q},\qquad
      \frac 43 <q <4,\qquad
      n\ge3.
  \end{equation}
\end{theorem}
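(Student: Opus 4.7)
The argument rests on functional-calculus techniques, with heat-kernel Gaussian bounds for $H$ as the starting point. The first step is to prove pointwise Gaussian upper bounds
\begin{equation*}
|e^{-tH}(x,y)|\leq C t^{-n/2}e^{-c|x-y|^2/t},\qquad t>0,\; x,y\in\R^n.
\end{equation*}
The diamagnetic inequality $|e^{-t\Delta_A}g(x)|\leq e^{t\Delta}|g|(x)$ handles the magnetic part by pointwise comparison with the classical heat kernel, while Simon's perturbation theory for Kato-class potentials --- applicable since \eqref{eq:smallkato2} places $\|V_-\|_K$ strictly below the critical Kato constant $\pi^{n/2}/\Gamma(n/2-1)$ --- preserves Gaussian bounds in passing from $\Delta_A$ to $H=\Delta_A+V$. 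By the Duong--Robinson / Ouhabaz theory, such bounds yield a bounded $H^\infty(\Sigma_\theta)$-functional calculus for $H$ on $L^q$ for every $1<q<\infty$; in particular the imaginary powers $H^{is}$ (and $(-\Delta)^{is}$) are $L^q$-bounded with at most polynomial growth in $|s|$.

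For \eqref{eq:magsob2A}, which is equivalent to the $L^q$-boundedness of $R=H^{1/4}(-\Delta)^{-1/4}$, the plan is to apply Stein's complex interpolation theorem to the analytic family $T_z=H^z(-\Delta)^{-z}$ on the strip $0\leq\mathrm{Re}\,z\leq 1$. On $\mathrm{Re}\,z=0$, $T_{iy}=H^{iy}(-\Delta)^{-iy}$ is $L^q$-bounded for every $1<q<\infty$ with polynomial growth in $|y|$. On $\mathrm{Re}\,z=1$ one writes
\begin{equation*}
T_1=H(-\Delta)^{-1}=I+W(-\Delta)^{-1},\qquad W=2iA\cdot\nabla+i\,\mathrm{div}\,A+|A|^2+V,
\end{equation*}
and uses the Lorentz hypotheses $|A|^2+V\in L^{n/2,\infty}$ and $A\in L^{n,\infty}$: Hardy--Littlewood--Sobolev combined with H\"older in Lorentz scales yields $L^q$-boundedness of each piece of $W(-\Delta)^{-1}$ for $1<q<n/2$ (the $\mathrm{div}\,A$ contribution is recombined with $A\cdot\nabla(-\Delta)^{-1}$ by integration by parts, so that only $A$ itself enters). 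Stein interpolation at $\mathrm{Re}\,z=1/4$, with endpoint exponents chosen optimally, then produces $R\colon L^q\to L^q$ for the full range $1<q<2n$, establishing \eqref{eq:magsob2A}.

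The reverse estimate \eqref{eq:magsob2B} is obtained by the symmetric scheme applied to $S_z=(-\Delta)^z H^{-z}$: on the imaginary axis the functional calculus above again gives polynomial-growth $L^q$-bounds, while on $\mathrm{Re}\,z=1$ one writes $S_1=I-WH^{-1}$ and controls $WH^{-1}$ using the Gaussian kernel estimate $H^{-1}(x,y)\lesssim|x-y|^{2-n}$ (which makes $H^{-1}$ share the Riesz-potential mapping properties of $(-\Delta)^{-1}$) together with the boundedness of the magnetic Riesz transform $\nabla_A H^{-1/2}$, needed to treat the first-order piece $A\cdot\nabla H^{-1}$. Since the magnetic Riesz transform is only $L^q$-bounded in a narrower range, centered at the self-dual exponent $q=2$, the final Stein interpolation yields precisely the restricted range $4/3<q<4$ claimed in \eqref{eq:magsob2B}.

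The main obstacle is the right-edge analysis for \eqref{eq:magsob2A}: in the absence of any gauge assumption in this theorem (contrast with Theorem \ref{thm:main}), the first-order term $A\cdot\nabla(-\Delta)^{-1}$ and the distributional term $(\mathrm{div}\,A)(-\Delta)^{-1}$ must be manipulated jointly, with the Lorentz regularity $A\in L^{n,\infty}$ playing the decisive role in closing the H\"older--HLS chain at scale $n/2$. The symmetric analysis for \eqref{eq:magsob2B} is more delicate still, since $H^{-1}$ lacks the exact kernel identities of $(-\Delta)^{-1}$, and the $L^q$-range for magnetic Riesz transforms under our general hypotheses is constrained; this is what forces the narrower band $4/3<q<4$ in the reverse direction.
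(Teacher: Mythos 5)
Your approach to \eqref{eq:magsob2A} matches the paper's almost exactly: Gaussian heat-kernel bounds for $H$ via the diamagnetic inequality and Kato-class perturbation, $L^q$-boundedness of the imaginary powers $H^{iy}$ with polynomial growth, then Stein interpolation for $T_z=H^z(-\Delta)^{-z}$ with the right edge $\Re z=1$ treated by Lorentz--H\"older and HLS on the range $1<r<n/2$, producing $1<q<2n$ at $z=1/4$. (The paper obtains the $H^{iy}$ bound via the Sikora--Wright finite-speed-of-propagation theorem rather than the Duong--Robinson/Ouhabaz functional calculus, but these are interchangeable here.) You also notice that the term $\mathrm{div}\,A$ is not controlled by \eqref{eq:ass2A}, which is a real soft spot in the paper; your integration-by-parts sketch is a reasonable way around it, though the paper simply absorbs it into the $L^{n/2,\infty}$ norm.

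For the reverse estimate \eqref{eq:magsob2B}, however, your route has a genuine gap, and it is also not what the paper does. You propose going out to $\Re z=1$, writing $S_1=I-WH^{-1}$, and invoking pointwise Gaussian bounds on $H^{-1}$ and $L^q$-boundedness of the magnetic Riesz transform $\nabla_AH^{-1/2}$ to control $WH^{-1}$. Nothing in the hypotheses --- $A\in L^{2}_{loc}\cap L^{n,\infty}$, $|A|^2+V\in L^{n/2,\infty}$, a mere smallness of $\|V_-\|_K$ --- delivers $L^q$-boundedness of the magnetic Riesz transform for $q\ne2$; that is a deep result requiring additional structure (and even on $L^2$ one only has it from the form domain identity). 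Moreover, the claim that whatever $L^q$-range the Riesz transform happens to satisfy would give exactly $4/3<q<4$ is unfounded: you would first need to know that range, and there is no reason for it to interpolate at $z=1/4$ to the interval $(4/3,4)$. The paper avoids all of this: it only establishes the \emph{single} $L^2$ endpoint
\begin{equation*}
\|\nabla f\|_{L^2}\le C\,\|H^{1/2}f\|_{L^2},
\end{equation*}
using the form-boundedness of $V_-$ (Lemma~\ref{rem:selfadj}), the diamagnetic inequality $|\nabla|f||\le|\nabla_Af|$, and the hypothesis $|A|^2+V\in L^{n/2,\infty}$, and then applies Stein interpolation to $T_z=(-\Delta)^zH^{-z}$ on the \emph{half} strip $0\le\Re z\le 1/2$. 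With $\Re z=0$ giving all $1<p<\infty$ and $\Re z=1/2$ giving only $p=2$, the exponent relation $\tfrac1q=\tfrac14+\tfrac1{2p}$ at $z=1/4$ is precisely what produces $4/3<q<4$. So the restricted range is a consequence of having only an $L^2$ bound at the right edge, not of any range restriction on magnetic Riesz transforms. You should replace your $\Re z=1$ analysis with this $L^2$ endpoint at $\Re z=1/2$.
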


\section{Proof of Theorem \ref{thm:equiv}}\label{sec:equiv}

We start with the proof of Theorem \ref{thm:equiv},
divided into several steps. First we
need to prove that the heat kernel associated with the operator $H$
is well behaved under quite general assumptions:
\begin{proposition}\label{pro:heatk}
  Consider the selfadjoint operator $H=-(\nabla-iA(x))^{2}+V(x)$
  on $L^{2}(\mathbb{R}^{n})$, $n\ge3$.
  Assume that $A\in L^{2}_{loc}(\R^n, \R^n)$, moreover the positive
  and negative parts $V_{\pm}$ of $V$ satisfy
  \begin{equation}\label{eq:Vpiu}
    V_{+}\ \text{is of Kato class},
  \end{equation}
  \begin{equation}\label{eq:Vmeno}
    \|V_{-}\|_K< c_{n}=\pi^{n/2}/\Gamma\left(n/2-1\right)).
  \end{equation}
  Then $e^{-tH}$ is an integral operator and its heat kernel
  $p_{t}(x,y)$ satisfies the pointwise estimate
  \begin{equation}\label{eq:heatest}
    |p_{t}(x,y)|\le
    \frac{(2\pi t)^{-n/2}}{1-\|V_{-}\|_{K}/c_{n}}
      e^{-|x-y|^{2}/(8t)}.
  \end{equation}
\end{proposition}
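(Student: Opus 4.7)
The plan is to combine the diamagnetic inequality (to remove the magnetic potential $A$ together with the positive part $V_+$) with a Dyson expansion plus Khasminskii-type iteration (to handle the Kato-class negative part $V_-$). The leading observation is that the magnetic phase in the semigroup generated by $H_0:=-\Delta_A+V_+$ is unimodular, so it disappears under absolute value, while $V_+\ge 0$ only damps the semigroup. Hence the problem reduces to controlling a purely scalar heat equation perturbed by the Kato-class function $V_-$, a classical setting.

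First I would invoke the Hess-Schrader-Uhlenbrock diamagnetic inequality. Under the standing assumptions $A\in L^2_{loc}$ and $V_+$ of Kato class it gives the pointwise domination
\[
|e^{-tH_0}f(x)|\le e^{-t(-\Delta+V_+)}|f|(x)\le e^{t\Delta}|f|(x),
\]
and hence the Gaussian kernel bound $|p_t^{H_0}(x,y)|\le (4\pi t)^{-n/2}e^{-|x-y|^2/(4t)}$. The second inequality uses that $e^{t\Delta}$ is positivity preserving and $V_+\ge 0$, so the Feynman-Kac-type semigroup $e^{-t(-\Delta+V_+)}$ is pointwise dominated by $e^{t\Delta}$.

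Next I would expand $e^{-tH}$ as a Dyson series in the perturbation $V_-$,
\[
e^{-tH}=\sum_{k\ge 0}\int_{0\le s_1\le\cdots\le s_k\le t} e^{-(t-s_k)H_0}V_-\cdots V_- e^{-s_1H_0}\,ds_1\cdots ds_k,
\]
take absolute values and insert the Gaussian bound from the first step. The standard trick to expose the Kato norm is to split each Gaussian factor as $e^{-|z|^2/(4\tau)}=e^{-|z|^2/(8\tau)}\cdot e^{-|z|^2/(8\tau)}$. One half, combined with the time integration and the Chapman-Kolmogorov identity, produces at each internal vertex a Newtonian Green's function $\sim c_n^{-1}|y_j-y_{j-1}|^{2-n}$, which integrates against $V_-$ to at most $\|V_-\|_K$; the other half recombines along the chain to give the overall Gaussian decay $e^{-|x-y|^2/(8t)}$. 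The resulting geometric series $\sum_k(\|V_-\|_K/c_n)^k=(1-\|V_-\|_K/c_n)^{-1}$ converges by the smallness hypothesis \eqref{eq:Vmeno} and yields exactly \eqref{eq:heatest}.

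The main technical obstacle is the bookkeeping in this last step: arranging the iterated space-time convolutions so that exactly one copy of the Newtonian Green's function and one copy of the Kato norm appear per level, producing the sharp geometric ratio $\|V_-\|_K/c_n$ while preserving the sub-Gaussian factor $e^{-|x-y|^2/(8t)}$ in the sum. A cleaner probabilistic alternative, should the iteration prove cumbersome, is to write $e^{-tH}$ via the Feynman-Kac-It\^o formula, bound the unimodular magnetic phase by $1$, and apply Khasminskii's lemma to the Brownian-bridge expectation $\mathbb{E}_{x,y}^{t}\bigl[\exp\int_0^t V_-(\omega_s)\,ds\bigr]$; this delivers essentially the same estimate with less combinatorial accounting.
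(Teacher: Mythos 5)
Your argument is correct in outline and reaches the same conclusion, but by a different route from the paper. The paper's proof is essentially a two-citation argument: it applies Simon's diamagnetic pointwise inequality to the \emph{full} electromagnetic semigroup, $|e^{t[(\nabla - iA)^2 - V]}g|\le e^{t(\Delta - V)}|g|$, reducing directly to the purely electric case $-\Delta + V$, and then invokes the second part of Proposition~5.1 in D'Ancona--Pierfelice \cite{pda-vp}, which already states the Gaussian heat-kernel bound \eqref{eq:heatest} for $-\Delta + V$ under exactly the hypotheses $V_+\in K_n$ and $\|V_-\|_K<c_n$. Your approach instead peels off $A$ \emph{and} $V_+$ in one domination chain $|e^{-t(-\Delta_A+V_+)}f|\le e^{-t(-\Delta+V_+)}|f|\le e^{t\Delta}|f|$, and then reproves the electric-case result from scratch by a Dyson (or Khasminskii/Feynman--Kac) iteration in the single remaining perturbation $V_-$. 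This is legitimate and in fact is, morally, the interior of the D'Ancona--Pierfelice proof you would otherwise cite (and it mirrors the iteration the paper itself carries out later in Lemma~\ref{rem:selfadj} at the level of quadratic forms). What the paper's route buys is brevity; what yours buys is self-containedness. The one place where your sketch should not be waved through is the identification of the precise threshold constant $c_n=\pi^{n/2}/\Gamma(n/2-1)$: it does not arise from the Newtonian potential itself (whose normalization is $4\pi^{n/2}/\Gamma(n/2-1)$, as the paper uses in Lemma~\ref{rem:selfadj}) but from the time integral of the \emph{half-split} Gaussian $\int_0^\infty (4\pi\tau)^{-n/2}e^{-|z|^2/(8\tau)}\,d\tau$, and you also have to track why the surviving Gaussian recombines to $(2\pi t)^{-n/2}e^{-|x-y|^2/(8t)}$ rather than the free profile. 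You correctly flag this bookkeeping as the main technical obstacle; if you carry it out, the proof is complete.
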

\begin{proof}
  We recall Simon's diamagnetic pontwise inequality
  (see e.g.~Theorem B.13.2 in \cite{simon}), which holds under weaker
  assumptions than ours: for any test function $g(x)$,
  \begin{equation*}
    |e^{t[(\nabla-iA(x))^{2}-V]}g|\le
    e^{t (\Delta-V)}|g|.
  \end{equation*}
  Notice that by choosing a delta sequence $g_{\epsilon}$ of (positive)
  test functions, this implies an analogous pointwise inequality
  for the corresponding heat kernels. Now we can apply the second
  part of Proposition 5.1 in \cite{pda-vp} which gives precisely
  estimate \eqref{eq:heatest} for the heat kernel of
  $e^{-t (\Delta-V)}$ under \eqref{eq:Vpiu}, \eqref{eq:Vmeno}.
\end{proof}
The second tool we shall use is a weak type estimate for imaginary
powers of selfadjoint operators, defined in the sense of spectral
theory. This follows easily from the previous heat kernel bound and
the techniques of Sikora and Wright (see \cite{sikora-wright}):
\begin{proposition}\label{pro:imag}
  Let $H$ be as in Proposition \ref{pro:heatk}, and assume in addition
  that $H\ge0$. Then for all
  $y\in \mathbb{R}$ the imaginary powers $H^{iy}$
  satisfy the (1,1) weak type estimate
  \begin{equation}\label{eq:weakest}
    \|H^{iy}\|_{L^{1}\to L^{1,\infty}}\le C(1+|y|)^{n/2}.
  \end{equation}
\end{proposition}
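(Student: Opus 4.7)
The plan is to invoke the general Sikora-Wright machinery \cite{sikora-wright}, which converts Gaussian heat kernel bounds for a positive selfadjoint $H$ into weak type $(1,1)$ bounds for spectrally defined operators $m(H)$, with quantitative tracking of the dependence on the symbol $m$. Proposition \ref{pro:heatk} already supplies the Gaussian bound \eqref{eq:heatest}; it therefore remains to extract from it the sharp $(1+|y|)^{n/2}$ growth for the particular symbol $m_y(\lambda)=\lambda^{iy}$.

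First I would record that $\|H^{iy}\|_{L^2\to L^2}=1$ by the spectral theorem, and that \eqref{eq:heatest} yields, via elementary kernel estimates, the Davies-Gaffney off-diagonal bound
\begin{equation*}
\bigl\|\mathbf{1}_B\,e^{-tH}\mathbf{1}_A\bigr\|_{L^2\to L^2}\le C\exp\bigl(-c\,\mathrm{dist}(A,B)^2/t\bigr)
\end{equation*}
for all Borel sets $A,B\subset\R^n$. Next I would decompose the symbol dyadically in the spectral variable: fix a smooth bump $\psi$ supported in $[1/2,2]$ with $\sum_{k\in\Z}\psi(2^{-k}\lambda)\equiv 1$ on $(0,\infty)$, and write $H^{iy}=\sum_{k\in\Z} m_{y,k}(H)$ with $m_{y,k}(\lambda)=\psi(2^{-k}\lambda)\,\lambda^{iy}$. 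Each $m_{y,k}$ is a rescaling of a single Schwartz profile whose Sobolev norms grow only polynomially in $|y|$, as is evident from $\partial_\lambda^\ell\lambda^{iy}=\lambda^{iy-\ell}\prod_{0\le j<\ell}(iy-j)$.

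I would then carry out a Calder\'on-Zygmund decomposition $f=g+\sum_Q b_Q$ at level $\alpha$. The good part $g$ is controlled directly by the $L^2$ bound. For each mean-zero atom $b_Q$ supported on a cube $Q$, the action of $m_{y,k}(H)$ on $b_Q$ is split into a local contribution on a fixed dilate of $Q$, estimated by $L^2$ boundedness at the appropriate scale, and a far-field contribution where the Gaussian off-diagonal decay combined with the cancellation $\int b_Q=0$ produces summable decay in $k$. The \textbf{main obstacle} is the sharp tracking of the $y$-dependence: naively one obtains a much larger power of $|y|$ than $n/2$. Sikora and Wright handle this by complex interpolation applied to the analytic family $m_z(\lambda)=\lambda^z$, trading smoothness for $L^2$ boundedness so that exactly the threshold number $n/2$ of derivatives of $m_{iy}$ is required, matching the homogeneous dimension encoded in the Gaussian bound. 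Assembling the pieces then yields \eqref{eq:weakest}.
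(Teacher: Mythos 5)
Your proposal and the paper's argument rest on the same core tool: Sikora--Wright's weak type $(1,1)$ theorem for spectrally defined operators, applied with the Gaussian heat kernel bound \eqref{eq:heatest} from Proposition \ref{pro:heatk} as input. The paper's execution is shorter and somewhat different in emphasis: it invokes Theorem~3 of \cite{sikora} to convert \eqref{eq:heatest} directly into finite propagation speed for $\cos(t\sqrt H)$, and then cites Theorem~2 of \cite{sikora-wright} as a black box, since finite propagation speed together with the Gaussian on-diagonal bound are exactly its hypotheses. Your ``Davies--Gaffney off-diagonal bound'' plays the same logical role (the two are essentially equivalent for self-adjoint $H$, and that equivalence is precisely what Sikora's theorem encodes), so you have identified the correct hypothesis even if you phrase it differently. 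Where your sketch deviates is in the internal description of the Sikora--Wright machinery: their actual argument does not proceed by a dyadic spectral decomposition, Calder\'on--Zygmund decomposition, and complex interpolation in the way you describe, but rather uses finite propagation speed of the wave group to localize a Mellin/Fourier-type representation of $H^{iy}$ in terms of $\cos(t\sqrt H)$, with the sharp power $(1+|y|)^{n/2}$ emerging from careful kernel estimates. This inaccuracy is harmless for the logic since you ultimately defer to the theorem as established, but if you were to carry out the program you outline you would not be reproducing their proof, and the naive CZ route you sketch does, as you yourself flag, have difficulty delivering the sharp exponent $n/2$.
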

\begin{proof}
  By Theorem 3 in \cite{sikora} we obtain immediately that
  our heat kernel bound \eqref{eq:heatest} implies the finite
  speed of propagation for the wave kernel $\cos(t \sqrt{H})$,
  in the sense of \cite{sikora}, \cite{sikora-wright}, i.e.,
  \begin{equation*}
    (\cos(t \sqrt{H})\phi,\psi)_{L^{2}}=0
  \end{equation*}
  for all $\phi,\psi\in L^{2}$ with support in
  $B(\xi_{1},x_{1})$, $B(\xi_{2},x_{2})$ respectively, provided
  $|t|<2^{-1/2}(|x_{1}-x_{2}|-\xi_{1}-\xi_{2})$. Then we are
  in position to apply Theorem 2 from \cite{sikora-wright}
  which gives the required bound.
\end{proof}
We are ready to prove the first part of Theorem \ref{thm:equiv}.
\begin{proof}[Proof of \eqref{eq:magsob2A}]
  We shall use the Stein-Weiss interpolation theorem applied to
  the analytic family of operators
  \begin{equation*}
    T_{z}=H^{z}\cdot(-\Delta)^{-z}.
  \end{equation*}
  Here $H^{z}$ is defined by spectral theory while $(-\Delta)^{-z}$
  e.g. by the Fourier transform.
  Writing $z=x+iy$, we can decompose
  \begin{equation*}
    T_{z}=H^{iy}H^{x}(-\Delta)^{-x}(-\Delta)^{-iy},\qquad
    y\in \mathbb{R},\quad x\in[0,1].
  \end{equation*}
  The operators $H^{iy}$ and $(-\Delta)^{iy}$ are obviously bounded
  on $L^{2}$. On the side $\Re z=0$ the operator
  reduces to the composition of pure imaginary powers
  \begin{equation*}
    T_{iy}=H^{iy}(-\Delta)^{-iy}
  \end{equation*}
  and by the weak type estimate \eqref{eq:weakest} we obtain
  immediately by interpolation
  that $H^{iy}$, and hence
  $T_{iy}$ is bounded on $L^{p}$ for all $1<p<\infty$:
  \begin{equation}\label{eq:Re0}
    \|T_{z}f\|_{L^{p}}\le C(1+|y|)^{n/2}\|f\|_{L^{p}}
    \quad\text{for}\quad \Re z=0,\quad 1<p<\infty.
  \end{equation}
  Next we consider the case $\Re z= 1$.
  We start by proving the estimate
  \begin{equation}\label{eq:Dest}
    \|H f\|_{L^{r}}\leq C\|\Delta f\|_{L^{r}},\qquad
    1<r<\frac n2.
  \end{equation}
  For $f\in C^{\infty}_{c}(\R^n)$ we can write
  \begin{equation*}
    Hf=-\Delta f-2i A \cdot \nabla f+(|A|^{2}-i\nabla \cdot A+V)f.
  \end{equation*}
  We have then by H\"older's inequality in Lorentz spaces
  and assumption \eqref{eq:ass2A}
  \begin{equation*}
    \|A \cdot \nabla f\|_{L^{r}}\leq C
    \|A\|_{L^{n,\infty}}\|\nabla f\|_{L^{\frac{nr}{n-r},r}}\qquad
    1\le r<n,
  \end{equation*}
  and using the precised Sobolev embedding
  \begin{equation*}
    \|g\|_{L^{\frac{nr}{n-r},r}}\leq
    C\|\nabla g\|_{L^{r,r}}
    =\|\nabla g\|_{L^{r}}
  \end{equation*}
  (and the boundedness of Riesz operators, which rules out the
  case $r=1$) we obtain
  \begin{equation*}
    \|A \cdot \nabla f\|_{L^{r}}\leq C
    \|\Delta f\|_{L^{r}},\qquad 1<r<n.
  \end{equation*}
  In a similar way,
  \begin{equation*}
    \|(|A|^{2}-i\nabla \cdot A+V)f\|_{L^{r}}\leq C
    \||A|^{2}-i\nabla \cdot A+V\|_{L^{n/2,\infty}}
    \|f\|_{L^{\frac{nr}{n-2r},r}}\qquad
    1\le r<\frac n2
  \end{equation*}
  and again by the Sobolev embedding
  \begin{equation*}
    \|f\|_{L^{\frac{nr}{n-2r},r}}\leq C
    \|\Delta f\|_{L^{r}}
  \end{equation*}
  we conclude that
  \begin{equation*}
    \|(|A|^{2}-i\nabla \cdot A+V)f\|_{L^{r}}
    \leq C\|\Delta f\|_{L^{r}},\qquad
    1<r<\frac n2.
  \end{equation*}
  Summing up we obtain \eqref{eq:Dest}.
  Combining \eqref{eq:Dest} with the $L^r$-boundedness of the purely
  imaginary powers $H^{iy}$ and $-\Delta^{iy}$, we get
  \begin{equation}\label{eq:Tuno}
    T_{1+iy}:L^{r}\to L^{r},\qquad
    1<r<\frac n2.
  \end{equation}
  Interpolating \eqref{eq:Tuno} with
  \eqref{eq:Re0} we obtain
  \begin{equation*}
    \|T_{1/4}f\|_{L^{p}}\leq C\|f\|_{L^{p}}
  \end{equation*}
  for
  \begin{equation*}
    \frac1q=\frac 3{4p}+\frac1{4r}\qquad
    1<p<\infty,\quad
    1<r<\frac n2
    \implies\qquad
    1<q<2n
  \end{equation*}
  which concludes the proof.
\end{proof}

We pass now to the proof of the reverse estimate  \eqref{eq:magsob2B}. 
We shall need the following lemma:

\begin{lemma}\label{rem:selfadj}
  Assume that
  \begin{equation}\label{eq:katoass}
    A\in L^2_{loc}(\R^n),
    \qquad
    \|V_{-}\|_{K}< 4\pi^{n/2}/\Gamma(n/2-1).
  \end{equation}
  Then for some constnt $a<1$ the following inequality holds:
  \begin{equation}\label{eq:katoin}
    \int V_{-}|f|^{2}dx\le a\|\nabla_A f\|_{L^{2}}^{2}.
  \end{equation}
\end{lemma}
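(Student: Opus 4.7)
The plan is to combine the diamagnetic inequality with a classical Kato-norm form bound for the scalar Laplacian, thereby reducing the magnetic case to a sharp scalar estimate whose constant matches the threshold in \eqref{eq:katoass}.

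First, I would invoke Simon's pointwise diamagnetic inequality $|\nabla|f|(x)|\le|\nabla_A f(x)|$, which holds under $A\in L^2_{\text{loc}}$ (see Theorem B.13.2 in \cite{simon}, already used in the proof of Proposition \ref{pro:heatk}). Integrating in $x$ gives $\|\nabla|f|\|_{L^2}\le\|\nabla_A f\|_{L^2}$, so it suffices to prove the scalar inequality
\begin{equation*}
  \int V_{-}(x)\,u(x)^{2}\,dx \le a\int |\nabla u|^{2}\,dx
\end{equation*}
for all non-negative $u$ and some $a<1$; the lemma then follows by taking $u=|f|$.

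Second, I would establish the sharp form bound
\begin{equation*}
  \int V_{-}\,u^{2}\,dx \le c_{n}\|V_{-}\|_{K}\int |\nabla u|^{2}\,dx,
  \qquad
  c_{n}:=\frac{\Gamma(n/2-1)}{4\pi^{n/2}},
\end{equation*}
where $c_n$ is precisely the coefficient of the fundamental solution $c_{n}|x-y|^{2-n}$ of $-\Delta$ on $\R^n$, $n\ge3$. Since $1/c_{n}=4\pi^{n/2}/\Gamma(n/2-1)$ equals exactly the threshold in \eqref{eq:katoass}, this gives $a:=c_{n}\|V_{-}\|_{K}<1$. To prove it, I work with $u\in C^\infty_c$, $u\ge0$, and set $I=\int V_{-}u^{2}\,dx$. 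Writing $V_-u=-\Delta\bigl((-\Delta)^{-1}(V_{-}u)\bigr)$ and integrating by parts, Cauchy-Schwarz yields
\begin{equation*}
  I = \int \nabla u\cdot \nabla\bigl((-\Delta)^{-1}(V_{-}u)\bigr)\,dx
  \le \|\nabla u\|_{L^{2}}\,\|\nabla(-\Delta)^{-1}(V_{-}u)\|_{L^{2}}.
\end{equation*}
A further integration by parts identifies the last factor, squared, with $\int V_{-}u\cdot(-\Delta)^{-1}(V_{-}u)\,dx$, which via the explicit Riesz kernel equals the double integral of $V_{-}(x)u(x)|x-y|^{2-n}V_{-}(y)u(y)$ multiplied by $c_n$. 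A symmetric Cauchy-Schwarz split in $(x,y)$, combined with the definition of $\|V_-\|_K$ on the inner integral, bounds this double integral by $c_{n}\|V_{-}\|_{K}\cdot I$. Substituting back and cancelling $\sqrt{I}$ delivers the claim.

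The algebra of the second step is very short and does not pose a real obstacle; it is the exact scalar analogue of the Kato-class form bound used implicitly throughout the paper. The only soft point I anticipate is extending the integration-by-parts identity from $C^\infty_c$ to arbitrary $u=|f|$ with $f$ in the form domain of $H$, but this is a routine density argument given that $V_{-}$ is of Kato class and $-\Delta_A$ is selfadjoint.
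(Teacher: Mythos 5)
Your proof is correct and takes essentially the same approach as the paper: both reduce to the scalar inequality via the diamagnetic inequality $|\nabla|f||\le|\nabla_A f|$, and both prove the scalar bound by combining Cauchy--Schwarz on the Riesz kernel with the definition of the Kato norm, arriving at the same constant $a=c_n\|V_-\|_K<1$. The only organizational difference is that the paper estimates the operator norm of $V_-^{1/2}(-\Delta)^{-1}V_-^{1/2}$ directly from its kernel, whereas you run the same Cauchy--Schwarz through the quadratic form $\int V_- u^2\,dx$ after an integration by parts; these are equivalent packagings of the same calculation.
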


\begin{proof}
  The proof follows a standard
  argument. We begin by showing that
  \begin{equation}\label{eq:katoin00}
    \int V_-|f|^2dx\leq a\|\nabla f\|_{L^2}^2,
  \end{equation}
  for some $a<1$. This
  can be restated as
  \begin{equation*}
    (V_{-}^{1/2}(-\Delta)^{-1/2}f,V_{-}^{1/2}(-\Delta)^{-1/2}f)\le
      a\|f\|^{2},\qquad a<1
  \end{equation*}
  i.e. we must prove that the operator
  $T=V_{-}^{1/2}(-\Delta)^{-1/2}$ is bounded
  on $L^{2}$ with norm
  smaller than one. Equivalently, we must prove that the operator
  \begin{equation*}
    TT^{*}=V_{-}^{1/2}(-\Delta)^{-1}V_{-}^{1/2}
  \end{equation*}
  satisfies
  \begin{equation*}
    \|V_{-}^{1/2}(-\Delta)^{-1}V_{-}^{1/2}f\|^{2}\le
    b\|f\|^{2},\qquad b<1.
  \end{equation*}
  Writing explicitly the kernel of $(-\Delta)^{-1}$, we are reduced
  to prove
  \begin{equation*}
    I=\int|V_{-}(x)|\left|\int \frac{|V_{-}(y)|^{1/2}}{|x-y|^{n-2}}
      f(y)dy\right|^{2}dx\le k_{n}^{2}b\|f\|^{2}
  \end{equation*}
  where $k_{n}=4\pi^{n/2}/\Gamma(n/2-1)$, $b<1$. Now by Cauchy-Schwartz
  \begin{equation*}
    I\le
    \int|V_{-}(x)|\left(\int \frac{|V_{-}(y)|}{|x-y|^{n-2}}dy\right)
      \left(\int \frac{|f(y)|^{2}}{|x-y|^{n-2}}
        dy\right)dx
  \end{equation*}
  which gives
  \begin{equation*}
    I \le  \|V_{-}\|_{K}
      \iint \frac{|V_{-}(x)|}{|x-y|^{n-2}}|f(y)|^{2}dydx=
      \|V_{-}\|_{K}^{2}\|f\|^{2}
  \end{equation*}
  and this proves \eqref{eq:katoin00} under the smallness assumption 
  \eqref{eq:katoass}.
  Applying the same computation to the function $|f|$ instead of $f$,
  we deduce from \eqref{eq:katoin00} that
  \begin{equation}\label{eq:katoin000}
    \int V_-|f|^2dx\leq a\|\nabla|f|\,\|_{L^2}^2.
  \end{equation}
  Since $A\in L^2_{\text{loc}}$, we can apply the diamagnetic inequality
  \begin{equation*}
    |\nabla|f||\leq|\nabla_Af|,
    \qquad
    \text{a.e. in }\R^n
  \end{equation*}
  (see e.g. \cite{LL}) to obtain \eqref{eq:katoin}.
\end{proof}

\begin{proof}[Proof of \eqref{eq:magsob2B}]
  We begin by proving the $L^{2}$ inequality
  \begin{equation}\label{eq:L2ineq}
    \|(-\Delta)^{1/2}f\|_{L^{1}}\simeq
    \|\nabla f\|_{L^{2}}\le C\|H^{1/2}f\|_{L^{2}}.
  \end{equation}
  We can write, with the notation $\nabla_{A}=\nabla-iA$,
  \begin{equation*}
    \|H^{1/2}f\|^{2}=(Hf,f)=-(\nabla_{A}^{2}f,f)+\int V|f|^{2}=
       \|\nabla_{A}f\|^{2}_{L^{2}}+\int V|f|^{2}
  \end{equation*}
  and this implies
  \begin{equation*}
    \|H^{1/2}f\|^{2}\ge \|\nabla_{A}f\|^{2}_{L^{2}}-
      \int V_{-}|f|^{2}.
  \end{equation*}
  Thus by Lemma \ref{rem:selfadj} we have for some $a<1$
  \begin{equation*}
    \|H^{1/2}f\|^{2}\ge (1-a)\|\nabla_{A}f\|^{2}_{L^{2}}
  \end{equation*}
  so that, in order to prove \eqref{eq:L2ineq}, it is
  sufficient to prove the inequality
  \begin{equation}\label{eq:ineqb}
    \|\nabla f\|_{L^{2}}\le C
    \|\nabla_{A}f\|_{L^{2}}.
  \end{equation}
  Now,
  using as in the first half of the proof the H\"older inequality and the
  Sobolev embedding in Lorentz spaces, we can write
  \begin{equation*}
    \|A|f|\|^{2}_{L^{2}}+\int V|f|^{2}\le
    C\||A|^{2}+V\|_{L^{n/2,\infty}}\||f|\|_{L^{\frac{2n}{n-2},2}}
    \le  
    C\|\nabla|f|\|_{L^{2}}
  \end{equation*}
  by assumption \eqref{eq:ass2A}. Then, by the diamagnetic inequality
  \begin{equation*}
    |\nabla|f||\leq|\nabla_Af|
  \end{equation*}
  we obtain
  \begin{equation}\label{eq:L21}
    \|Af\|^{2}_{L^{2}}+\int V|f|^{2}\le C\|\nabla_{A}f\|^{2}_{L^{2}}.
  \end{equation}
  Moreover we have
  \begin{equation*}
    |(\nabla-iA)f|^{2}\ge\Bigl||\nabla f|-|Af|\Bigr|^{2}
    \implies
    |\nabla f|^{2}\le 2|\nabla_{A}f|^{2}+2|Af|^{2}
  \end{equation*}
  which implies
  \begin{equation*}
    \|\nabla f\|^{2}_{L^{2}}\le
    2\|\nabla_{A}f\|^{2}_{L^{2}}+2\|Af\|^{2}_{L^{2}}
  \end{equation*}
  and combinig this with \eqref{eq:L21} we get
  \begin{equation*}
    \|\nabla f\|^{2}_{L^{2}}\le
    C \|\nabla_{A} f\|^{2}_{L^{2}}-2\int V|f|^{2}\le
    C \|\nabla_{A} f\|^{2}_{L^{2}}-2\int V_{-}|f|^{2}.
  \end{equation*}
  Using again Lemma \ref{rem:selfadj} we finally arrive at 
  \eqref{eq:ineqb}, so that the claimed estimate
  \eqref{eq:L2ineq} is proved.
    
  Now we can use again the Stein-Weiss interpolation theorem, applied to
  the analytic family of operators
  \begin{equation*}
    T_{z}=(-\Delta)^{z}H^{-z}
  \end{equation*}
  with $z$ in the range $0\le \Re z\le 1/2$. 
  Writing $z=x+iy$ we have
  \begin{equation*}
    T_{z}=(-\Delta)^{iy}(-\Delta)^{x}H^{-x}H^{-iy},\qquad
    y\in \mathbb{R},\quad x\in[0,1/2].
  \end{equation*}
  The operators $H^{-iy}$ and $(-\Delta)^{iy}$ are bounded
  on $L^{2}$, while estimate \eqref{eq:L2ineq} proves that
  $(-\Delta)^{\frac 12}H^{-\frac 12}$ is also bounded on $L^{2}$.
  This shows that
  \begin{equation}\label{eq:Re12}
    \|T_{z}f\|_{L^{2}}\le C\|f\|_{L^{2}}
    \quad\text{for}\quad \Re z=1/2.
  \end{equation}
  On the side $\Re z=0$ the operator
  reduces to the composition of pure imaginary powers
  \begin{equation*}
    T_{iy}=(-\Delta)^{iy}H^{-iy}
  \end{equation*}
  and arguing as in the proof of \eqref{eq:Re0} we get that
  $T_{iy}$ is bounded on $L^{p}$ for all $1<p<\infty$:
  \begin{equation}\label{eq:Re02}
    \|T_{z}f\|_{L^{p}}\le C(1+|y|)^{n/2}\|f\|_{L^{p}}
    \quad\text{for}\quad \Re z=0,\quad 1<p<\infty.
  \end{equation}
  Then by the Stein-Weiss interpolation theorem we obtain as above
  \begin{equation*}
    \|T_{1/4}f\|_{L^{q}}\le C\|f\|_{L^{q}}
  \end{equation*}
  with
  \begin{equation*}
    \frac1q=\frac14+\frac{1}{2p},\qquad
    1<p<\infty \qquad
    \implies \qquad
    \frac43< q<4.
  \end{equation*}
\end{proof}

\section{Proof of Strichartz estimates}\label{sec:main}
Let us first recall some well known facts about the free propagator.
First of all, the free Strichartz estimates for $T(t)=e^{it\Delta}$,
its dual operator and the operator $TT^*$ are
\begin{equation}\label{eq:free1}
  \|e^{it\Delta}\varphi\|_{L^pL^q}\leq C\|\varphi\|_{L^2},
\end{equation}
\begin{equation}\label{eq:free2}
  \left\|\int e^{-is\Delta}F(s,\cdot)ds\right\|_{L^2}\leq C\|F\|_{L^{\widetilde p'}L^{\widetilde
  q'}},
\end{equation}
\begin{equation}\label{eq:free3}
  \left\|\int_0^t e^{i(t-s)\Delta}F(s,\cdot)ds\right\|_{L^pL^q}\leq C\|F\|_{L^{\widetilde p'}L^{\widetilde
  q'}},
\end{equation}
for all Schr\"odinger admissible couples $(p,q),(\widetilde
p,\widetilde q)$ satisfying
\begin{equation*}
  \frac 2p=\frac n2-\frac nq,
  \qquad
  p\geq2,
\end{equation*}
with $p\neq2$ if $n=2$ (see \cite{GinibreVelo95-generstric},
\cite{KeelTao98-endpoinstric}). Moreover, we recall the following
estimate:
\begin{equation}\label{eq:crucial}
  \left\||D|^{\frac12}\int_0^t e^{i(t-s)\Delta}F(s,\cdot)ds\right\|_{L^pL^q}
  \lesssim
  \sum_{j\in\Z}2^{\frac j2}\|F_j\|_{L^2L^2},
\end{equation}
for any admissible couple $(p,q)$ as above, where
\begin{equation*}
  F=\sum_{j\in\Z}F_j,
  \qquad
  \text{supp}F_j\subset\{2^j\leq|x|\leq2^{j+1}\}\times \R
\end{equation*}
Estimate \eqref{eq:crucial} was proved in \cite{RV} first; actually
it follows by mixing the free Strichartz estimates for $T(t)$ with
the dual of the local smoothing estimates which were proved
independently by \cite{cs}, \cite{s} and \cite{v}. In the paper
\cite{RV} the endpoint estimate for $p=2$ is not proved (and indeed it
predates the Keel-Tao paper \cite{KeelTao98-endpoinstric}).
The endpoint case $p=2$ in dimension $n\geq3$ is a consequence of
Lemma 3 in \cite{IK}.

Finally, we need to recall the local smoothing estimates for the
magnetic propagator $e^{itH}$ proved in \cite{FV} under assumptions
less restrictive than the ones of the present paper: we have
\begin{equation}\label{eq:fanellivega}
  \sup_{R>0}\frac1R\int\int_{|x|\leq
  R}|\nabla_Ae^{itH}\varphi|dxdt
  +\sup_{R>0}\frac1{R^2}\int\int_{|x|=R}|e^{itH}\varphi|^2d\sigma_Rdt
  \lesssim
  \|(-\Delta_A)^{\frac14}\varphi\|_{L^2}^2,
\end{equation}
where the constant in the inequality only depends on $B_\tau$ and
$(\partial_rV)_+$.

We are now ready to prove Theorem \ref{thm:main}.
Since
$\text{div}A=0$, we can expand $H$ as follows:
\begin{equation}\label{eq:expansion}
  H=-\Delta+2iA\cdot\nabla_A-|A|^2+V.
\end{equation}
As a consequence, by the Duhamel formula we can write
\begin{equation}\label{eq:duhamel}
  e^{itH}\varphi=e^{it\Delta}\varphi+\int_0^te^{i(t-s)\Delta}R(x,D)e^{itH}\varphi,
  ds,
\end{equation}
where the perturbative operator $R(x,D)$ is given by
\begin{equation}\label{eq:perturb}
  R(x,D)=2iA\cdot\nabla_A-|A|^2+V.
\end{equation}
By \eqref{eq:free1} and \eqref{eq:crucial} we have
\begin{equation}\label{eq:prima}
  \left\||D|^{\frac12}e^{-itH}\varphi\right\|_{L^pL^q}
  \leq C\||D|^{\frac12}\varphi\|_{L^2}+
  \sum_{j\in\Z}2^{\frac
  j2}\left\|\chi_jR(x,D)e^{itH}\varphi\right\|_{L^2L^2},
\end{equation}
where $\chi_j$ is the characteristic function of the ring
$2^j\leq|x|\leq2^{j+1}$. For the first term at the RHS of
\eqref{eq:prima}, by \eqref{eq:magsob2B} we have
\begin{equation}\label{eq:seconda}
  \||D|^{\frac12}\varphi\|_{L^2}\leq C\|H^{\frac14}\varphi\|_{L^2}.
\end{equation}
On the other hand, we can split the second term as follows
\begin{align}\label{eq:terza}
  & \sum_{j\in\Z}2^{\frac j2}\left\|\chi_jR(x,D)e^{itH}\varphi\right\|_{L^2L^2}
  \\
  &
  \leq
  \sum_{j\in\Z}2^{\frac j2}\left(\left\|\chi_jA\cdot\nabla_Ae^{itH}\varphi\right\|_{L^2L^2}
  +
  \left\|\chi_j(V-|A|^2)e^{itH}\varphi\right\|_{L^2L^2}\right)
  =I+II.
  \nonumber
\end{align}
By H\"older inequality, assumption \eqref{ass:decay} and the
smoothing estimates \eqref{eq:fanellivega} we have
\begin{align}\label{eq:quarta}
  I&
  \leq\left(\sum_{j\in\Z}2^{j}\sup_{|x|\sim 2^{j}}|A|\right)\cdot
  \left(\sup_{R>0}\frac1R\int\int_{|x|\leq
  R}|\nabla_Ae^{itH}\varphi|^2dxdt\right)^{\frac12}
  \lesssim
  \|(-\Delta_A)^{\frac14}\varphi\|_{L^2}
  \\
  II & \leq\left(\sum_{j\in\Z}2^{2j}\sup_{|x|\sim
  2^j}(|V|+|A|^2)\right)\cdot
  \left(\sup_{R>0}\frac1{R^2}\int\int_{|x|=
  R}|e^{itH}\varphi|^2d\sigma_Rdt\right)^{\frac12}
  \label{eq:quinta}
  \\ &
  \lesssim
  \|(-\Delta_A)^{\frac14}\varphi\|_{L^2}^2
  .\nonumber
\end{align}
Now we remark that all the assumptions of Theorem \ref{thm:equiv}
are satisfied. Indeed, we know that
$A\in L^2_{\text{loc}}$; moreover, assumption \eqref{ass:decay} 
implies that $|A|\lesssim 1/|x|$ and $|V|\lesssim 1/|x|^{2}$,
hence \eqref{eq:ass2A} is satisfied. 
Thus by Theorem \ref{thm:equiv} (which holds also in the
special case $V\equiv0$) we get
\begin{align*}
\|(-\Delta_A)^\frac 14\varphi\|_{L^2} \leq C \|\varphi\|_{\dot
H^\frac 12}\lesssim\|H^\frac 14 \varphi\|_{L^2}.
\end{align*}
Collecting \eqref{eq:terza}, \eqref{eq:quarta} 
and \eqref{eq:quinta} we obtain
\begin{equation}\label{eq:sesta}
  \sum_{j\in\Z}2^{\frac j2}\left\|\chi_jR(x,D)e^{itH}\varphi\right\|_{L^2L^2}
  \lesssim
  \|H^\frac 14 \varphi\|_{L^2}
\end{equation}
and by \eqref{eq:prima}, \eqref{eq:seconda} and
\eqref{eq:sesta} we deduce
\begin{equation}\label{eq:settima}
  \left\||D|^{\frac12}e^{-itH}\varphi\right\|_{L^pL^q}
  \lesssim
  \|H^\frac 14 \varphi\|_{L^2},
\end{equation}
for any admissible couple $(p,q)$; notice that this includes aldo the 3D
endpoint estimate \eqref{eq:strichartz3D}. In order to conclude the
proof, it is now sufficient to use estimate \eqref{eq:magsob2A} 
which gives
\begin{equation}\label{eq:ottava}
  \left\|H^{\frac14}e^{-itH}\varphi\right\|_{L^pL^q}
  \lesssim
  \|H^\frac 14 \varphi\|_{L^2},
\end{equation}
and commuting $H^{1/4}$ with the flow $e^{itH}$
we obtain \eqref{eq:strichartz}.
However, in dimension 3 \eqref{eq:magsob2A} does
not cover the endpoint $q=6$ and we are left with \eqref{eq:settima}.

\end{document}